\newtheorem{theorem}{Theorem}[section]
\newcommand{\qed}{\hfill \ensuremath{\Box}}
\newenvironment{proof}
      {\medskip\noindent{\bf Proof:}\hspace{1mm}}
      {\hfill$\Box$\medskip}
\def\Ddots{\mathinner{\mkern1mu\raise\p@
\vbox{\kern7\p@\hbox{.}}\mkern2mu
\raise4\p@\hbox{.}\mkern2mu\raise7\p@\hbox{.}\mkern1mu}}
\title{\vspace{-0.7cm} Testing perfection is hard}
\author{Noga Alon\thanks{Sackler School of Mathematics and 
Blavatnik School of Computer Science, Tel Aviv University, Tel Aviv 
69978, Israel and Institute for Advanced Study, Princeton, 
New Jersey, 08540. 
Email: {\tt nogaa@tau.ac.il.} 
Research supported in part by an ERC Advanced grant, by a
USA-Israeli BSF
grant and by NSF grant No.  DMS-0835373.}
\and Jacob Fox\thanks{Department of Mathematics, MIT,
Cambridge, MA 02139-4307. Email: fox@math.mit.edu. Research
supported by a Simons Fellowship and NSF grant DMS 1069197.}}
\date{}
\begin{document}
\maketitle

\begin{abstract}
A graph property $\mathcal{P}$ is strongly testable if for every fixed
$\epsilon>0$ there is a one-sided $\epsilon$-tester for $\mathcal{P}$
whose query complexity is bounded by a function of $\epsilon$. In
classifying the strongly testable graph properties, the first author 
and Shapira showed
that any hereditary graph property (such as $\mathcal{P}$ the family of
perfect graphs) is strongly testable. A property is easily testable if
it is strongly testable with query complexity bounded by a polynomial
function of $\epsilon^{-1}$, and otherwise it is hard. One of our main
results shows that testing perfectness is hard. The proof shows that
testing perfectness is at least as hard as testing triangle-freeness,
which is hard. On the other hand, we show that  induced $P_3$-freeness
is easily testable. This settles one of the two exceptional graphs, the
other being $C_4$ (and its complement), left open in the characterization
by the first author and Shapira of graphs $H$ for which induced $H$-freeness is
easily testable.
\end{abstract}

\section{Introduction}

Property testing is an active area of computer science where one wishes to
quickly distinguish between objects that satisfy a property from objects
that are far from satisfying that property. The study of this notion
was initiated by Rubinfield and Sudan  \cite{RuSu}, and subsequently
Goldreich, Goldwasser, and Ron \cite{GGR} started the investigation
of property testers for combinatorial objects.  Graph property
testing in particular has attracted a great deal of attention.  A {\it
property} $\mathcal{P}$ is a family of (undirected) graphs closed under
isomorphism. A graph $G$ with $n$ vertices is {\it $\epsilon$-far from
satisfying $\mathcal{P}$} if one must add or delete at least $\epsilon
n^2$ edges in order to turn $G$ into a graph satisfying $\mathcal{P}$.

An {\it $\epsilon$-tester} for $\mathcal{P}$ is a randomized algorithm,
which given $n$ and the ability to check whether there is an edge between
a given pair of vertices, distinguishes with probability at least $2/3$
between the cases $G$ satisfies $\mathcal{P}$ and $G$ is $\epsilon$-far
from satisfying $\mathcal{P}$. Such an $\epsilon$-tester is one-sided
if, whenever $G$ satisfies $\mathcal{P}$, the $\epsilon$-tester
determines this with probability $1$.  A property $\mathcal{P}$ is
{\it strongly-testable} if for every fixed $\epsilon>0$ there exists a
one-sided $\epsilon$-tester for $\mathcal{P}$ whose query complexity is
bounded only by a function of $\epsilon$, which is independent of the
size of the input graph.

We call a property $\mathcal{P}$ {\it easily testable} if it is strongly
testable with a one-sided $\epsilon$-tester whose query complexity
is polynomial in $\epsilon^{-1}$, and otherwise $\mathcal{P}$ is
{\it hard}. This is analogous to classical complexity theory, where
an algorithm whose running time is polynomial in the input size is
considered fast, and otherwise slow. Call a hereditary graph property
{\it extendable} if for all but finitely many graphs in the family,
there is a larger graph in the family containing it as an induced
subgraph.  Most of the well-known hereditary graph properties are
extendable. As mentioned briefly in \cite{AFKS} and proved in
detail in \cite{GT}, there is a universal one-sided $\epsilon$-tester for
extendable hereditary graph properties which has query complexity at
most quadratic in the minimum possible query complexity of an optimal
one-sided $\epsilon$-tester. Indeed, it samples $d$ random vertices
(for some $d$), and if the subgraph they induce is in $\mathcal{P}$, it
accepts, and otherwise it rejects. The query complexity of this tester
is ${d \choose 2}$, and it is at least as accurate as any tester with query
complexity at most $d/2$. The query complexity is a lower bound for the
running time of an $\epsilon$-tester, and, if there is a polynomial time
recognition algorithm for membership in $\mathcal{P}$, the running time
is polynomial in the query complexity.  So while query complexity and
running time are different notions, they are often of comparable order.

For a graph $H$, let $\mathcal{P}_H$ denote the property of being
$H$-free, i.e., it is the family of graphs which do not contain $H$ as a
subgraph. The triangle removal lemma of Ruzsa and Szemer\'edi \cite{RuSz}
is one of the most influential applications of Szemer\'edi's regularity
lemma. It states that for every $\epsilon>0$ there is $\delta>0$ such
that any graph on $n$ vertices with at most $\delta n^3$ triangles can be
made triangle-free by removing at most $\epsilon n^2$ edges. The triangle
removal lemma is equivalent to the fact that $\mathcal{P}_{K_3}$ is strongly testable. Indeed,
the algorithm samples $t = 2\delta^{-1}$ triples of vertices uniformly at
random, where $\delta$ is picked according to the triangle removal lemma,
and accepts if none of them form a triangle, and otherwise rejects. Any
triangle-free graph is clearly accepted. If a graph is $\epsilon$-far from
being triangle-free, then it contains at least $\delta n^3$ triangles,
and the probability that none of the sampled triples forms a triangle
at most $(1 - \delta)^t < 1/3$. Notice that the query complexity depends
on the bound in the triangle removal lemma. As observed by Ruzsa and
Szemer\'edi, the triangle removal lemma gives a simple proof of Roth's
theorem \cite{Ro} that every dense subset of the integers contains a
$3$-term arithmetic progression. From Behrend's construction \cite{Be},
which gives a large subset of the first $n$ positive integers without
a $3$-term arithmetic progression, it follows that $\delta \leq \epsilon^{c\log \epsilon}$ in
the triangle removal lemma. This implies that testing triangle-freeness
is hard. Indeed, in the universal algorithm described earlier, in a
random sample of $d$ vertices, the expected number of triangles is
at most $\delta d^3$, and hence in the universal one-sided
$\epsilon$-tester for triangle-freeness, $1/3 \leq \delta d^3$, or
equivalently, $d \geq (3\delta)^{-1/3}$. As discussed earlier, the query
complexity of any one-sided  $\epsilon$-tester for triangle-freeness is
at least $d/2$.

The triangle removal lemma was extended in \cite{AFKS} (see also
\cite{ADLRY}) 
to the graph removal lemma. It says that for each $\epsilon>0$
and graph $H$ on $h$ vertices there is $\delta=\delta(\epsilon,H)>0$ such
that every graph on $n$ vertices with at most $\delta n^h$ copies of $H$
can be made $H$-free by removing at most $\epsilon n^2$ edges. The graph
removal lemma similarly implies that testing $H$-freeness is strongly
testable. The proof, which uses Szemer\'edi's regularity lemma, gives
a bound on the query complexity which is a tower of height a power of
$\epsilon^{-1}$. This was somewhat improved recently by the second author
\cite{Fo} to a tower of height logarithmic in $\epsilon^{-1}$. The first
author \cite{Al} showed that $H$-freeness is easily testable if and only
if $H$ is bipartite.

For a graph $H$, let $\mathcal{P}_H^*$ denote the property of being
induced $H$-free, i.e., it is the family of graphs which do not contain
$H$ as an induced subgraph. The graph removal lemma was 
extended by the first author,
Fischer, Krivelevich, Szegedy \cite{AFKS} to the induced graph removal
lemma, which states that for every $\epsilon>0$ and graph $H$ on $h$
vertices there is $\delta>0$ such that any graph on $n$ vertices with
at most $\delta n^h$ induced copies of $H$ can be made induced $H$-free
by adding or removing at most $\epsilon n^2$ edges. The induced graph
removal lemma is equivalent to the fact that, for any graph $H$, 
the property $\mathcal{P}_H^*$ is
strongly testable. The proof, which uses a strengthening of Szemer\'edi's
regularity lemma, gives a bound on the query complexity which is wowzer
of height a power of $\epsilon^{-1}$, which is one higher in the Ackermann
hierarchy than the tower function. This has recently been improved
by Conlon and the second author \cite{CF} to the tower function.

The length of a path is the number of edges it contains, and we let $P_k$
denote the path of length $k$. The first author and Shapira \cite{AlSh06} showed
that for any graph $H$ other than the paths of length at most $3$, a
cycle of length $4$, and their complements, testing induced $H$-freeness
is hard. For $H$ a path of length at most $2$ or their complements,
induced $H$-freeness is easily testable. They left open the cases that
$H$ is a path of length $3$ or a cycle of length $4$ (and equivalently
its complement). Here we settle one of the two remaining cases.

\begin{theorem}\label{thmp3}
Induced $P_3$-freeness is easily testable. 
\end{theorem}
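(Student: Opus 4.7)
The plan is to reduce Theorem~\ref{thmp3} to a \emph{polynomial} induced-$P_3$ removal lemma: there exist absolute constants $c,k>0$ such that any graph on $n$ vertices containing at most $c\,\epsilon^{k} n^{4}$ induced copies of $P_3$ can be made induced $P_3$-free by adding or deleting at most $\epsilon n^{2}$ edges. Given such a lemma, the canonical one-sided tester that samples $d=\mathrm{poly}(\epsilon^{-1})$ random vertices and rejects iff the induced subgraph contains an induced $P_3$ works with query complexity $\binom{d}{2}=\mathrm{poly}(\epsilon^{-1})$, by the same argument that turns the triangle removal lemma into a triangle-freeness tester in the introduction.

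To prove the removal lemma I would exploit the classical structural characterization of induced $P_3$-free graphs (cographs): every cograph on at least two vertices is either the disjoint union, or the join (complement of a disjoint union), of two smaller cographs. Given $G$ with few induced $P_3$'s, the strategy is to find a partition $V=V_{1}\cup V_{2}$ in which either almost every crossing pair is a non-edge (edit $G$ to a disjoint union) or almost every crossing pair is an edge (edit to a join), recurse on $G[V_{1}]$ and $G[V_{2}]$, and add up the edits. Existence of such a partition at each stage should follow from a counting principle: if both $G$ and $\overline{G}$ lack a balanced cut of density $o(1)$, then a uniformly random vertex $v$ typically has $A=N(v)$ and $B=V\setminus(N(v)\cup\{v\})$ both of linear size with bipartite edge density in $[\gamma,1-\gamma]$ for some $\gamma=\mathrm{poly}(\epsilon)$; a direct count of the 4-vertex configurations $\{v,a,b_{1},b_{2}\}$ and $\{v,a_{1},a_{2},b\}$, together with the appropriate edge and non-edge constraints between $A$ and $B$, then forces a polynomial-in-$\epsilon$ fraction of 4-subsets of $V$ to induce $P_3$, contradicting the assumption.

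The main obstacle is controlling the recursion: the cotree of a cograph can have depth $\Omega(n)$, so assigning each level an independent slice of the $\epsilon$-budget blows up catastrophically. The key will be a global accounting argument, for example by charging each edge modification introduced at some stage to a distinct induced $P_3$ it witnesses in the current intermediate graph, or by arranging that each stage decreases the total count of induced $P_3$'s by a polynomial factor so that only $O(\log \epsilon^{-1})$ stages are needed and a geometrically decreasing budget $\epsilon_i = \epsilon \cdot 2^{-i}$ suffices at stage $i$. Making either version of this accounting tight, while keeping the hidden polynomial factors in $\epsilon$ simultaneously under control through the recursion, is the technical crux of the argument.
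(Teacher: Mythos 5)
Your high-level architecture coincides with the paper's: reduce to a polynomial induced-$P_3$ removal lemma, prove that lemma by splitting along approximate cuts (the paper's $\beta$-cuts, i.e., partitions $V=V_1\cup V_2$ with $e(V_1,V_2)\leq\beta|V_1||V_2|$ or $e(V_1,V_2)\geq(1-\beta)|V_1||V_2|$), and invoke Seinsche's characterization of cographs. However, the two steps you leave open are precisely where the content lies, and your assessment of which one is hard is inverted. The recursion accounting that you call ``the technical crux'' has a one-line resolution that makes charging schemes, geometric budgets, and bounds on the number of stages unnecessary: in the recursive partition the sets of crossing pairs of distinct cuts are pairwise disjoint, since each pair $\{u,v\}$ is separated at most once; hence, using the \emph{same} parameter $\epsilon$ at every level, the total number of edited pairs over the whole recursion, of arbitrary depth, is at most $\epsilon\sum|V_1||V_2|\leq\epsilon\binom{n}{2}$. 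You also do not address the base case: when the recursion stops, either every part has at most $\epsilon n$ vertices, in which case $\sum_i\binom{|V_i|}{2}<\epsilon n^2/2$ further edits inside the parts yield a cograph and contradict farness, or some part of size greater than $\epsilon n$ has no $\epsilon$-cut, and only then is the counting lemma invoked.

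The real crux is the statement you dispatch with ``a direct count of the 4-vertex configurations'': that a graph with no $\beta$-cut contains at least $\mathrm{poly}(\beta)\,n^4$ induced copies of $P_3$. Your sketch --- pick a typical vertex $v$, split into $A=N(v)$ and $B=V\setminus(N(v)\cup\{v\})$, and count configurations $\{v,a,b_1,b_2\}$ --- is not carried out and does not obviously close: the claim that the bipartite density between $A$ and $B$ is bounded away from $0$ and $1$ must itself be extracted from the absence of a $\beta$-cut (it is not automatic for a single vertex's neighborhood split), and even granting it, producing an \emph{induced} $P_3$ requires simultaneous control of edges and non-edges inside $A$ and inside $B$, which a first-moment count of crossing pairs does not give. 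The paper's proof of this lemma is a genuinely delicate two-round sampling argument: draw $R=S\cup T$, condition on $G[S]$ having bounded minimum and maximum degree and on $T$ avoiding the few vertices complete or empty to $S$, enumerate the at most $2^{\alpha^{-1}}$ cuts of the cograph $G[S]$, and show by a union bound that with positive probability no such cut extends to a cut of $G[R]$, which forces an induced $P_3$ in $G[R]$ and a contradiction. Without a worked-out substitute for this step, your argument is a plan rather than a proof.
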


A well-known result of Seinsche \cite{Sei} gives a simple structure
theorem for induced $P_3$-free graphs. These graphs, also known as
cographs, are generated from the single vertex graph by complementation
and disjoint union. This is equivalent to the statement that every
induced $P_3$-free graph or its complement is not connected.

A quite general result of the 
first author and Shapira \cite{AlSh08a} states that
every hereditary family $\mathcal{P}$ of graphs is strongly testable. They
further asked which hereditary graph properties are easily testable, and,
in particular, for a few of the well-known hereditary families of graphs,
including perfect graphs and comparability graphs.

Note that the chromatic number of a graph is at least its clique number
as the vertices of any clique must receive different colors in a proper
coloring. A graph is {\it perfect} if every induced subgraph of it
satisfies that its clique number and chromatic number are equal. The
study of perfect graphs was started by Berge, partly motivated by the
study of the Shannon capacity in information theory, which lies between
the clique number and chromatic number of a graph. Perfect graphs
form a relatively large class of graphs for which several fundamental
algorithmic problems which are known to be NP-hard for general graphs,
such as the graph coloring problem, the maximum clique problem, and the
maximum independent set problem, can all be solved in polynomial time
(see \cite{GLS}). Also, it has significant connections with the study
of linear and integer programming (see, e.g., \cite{RAR}).

A famous conjecture of Berge, which was proved a few years ago by
Chudnovsky, Robertson, Seymour and Thomas \cite{CRST}, states that
a graph is perfect if and only if it contains no induced odd cycle
of length at least five or the complement of one. The proof in fact
establishes a stronger structural theorem for perfect graphs which was
conjectured by Conforti, Cornu\'ejols, and Vu\v skovi\' c. It says that
every perfect graph falls into one of a few basic classes, or admits one
of a few kinds of special decompositions.  Shortly afterwards, a proof
that perfect graphs can be recognized in polynomial time (as a function
of the number of vertices of the graph) was discovered by Chudnovsky,
Cornu\'ejols, Liu, Seymour, and Vu\v skovi\' c \cite{CCLSV}.

Another well-studied hereditary family of graphs are comparability
graphs. A comparability graph is a graph that connects pairs of elements
that are comparable to each other in a partial order. Gallai \cite{Gal}
classified these graphs by forbidden induced subgraphs, and Dilworth's
theorem \cite{Di} is equivalent to the statement that the complement of
comparability graphs are perfect. Further, comparability graphs can be
recognized in polynomial time (see McConnell and Spinrad \cite{MS}). Every
cograph is a comparability graph, and every comparability graph is a
perfect graph. It is natural to suspect that the structure theorem could
hint at a polynomial in $\epsilon^{-1}$ tester for perfectness similar to
testing cographs. However, we show that testing perfectness essentially
requires as much query complexity (or time) as testing triangle-freeness,
which is hard.

\begin{theorem}\label{perfectmain}
Testing perfectness is hard. 
\end{theorem}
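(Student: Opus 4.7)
The plan is to prove Theorem~\ref{perfectmain} by reducing testing triangle-freeness, shown in the introduction to be hard via Behrend's construction, to testing perfectness. Concretely, for every graph $G$ on $n$ vertices I would construct a graph $G' = \Phi(G)$ on $O(n)$ vertices satisfying: (i) every adjacency query to $G'$ is answerable by $O(1)$ adjacency queries to $G$; (ii) if $G$ is triangle-free then $G'$ is perfect; and (iii) if $G$ is $\epsilon$-far from triangle-free then $G'$ is $\Omega(\epsilon)$-far from perfect. Given such a $\Phi$, any one-sided $\epsilon$-tester for perfectness with query complexity $Q(\epsilon)$ yields a one-sided $\Omega(\epsilon)$-tester for triangle-freeness with query complexity $O(Q(\Omega(\epsilon)))$, contradicting the super-polynomial lower bound for testing triangle-freeness. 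One-sidedness is preserved because property (ii) ensures a one-sided perfectness tester never rejects a triangle-free input.

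The construction of $\Phi$ I would try is a local gadget attachment: each vertex of $G$ carries a constant-size set of auxiliary vertices, wired internally and across the edges of $G$ so that every triangle $\{u,v,w\}$ of $G$ creates in $G'$ an induced odd hole (or odd antihole), say a $C_5$, on two of the triangle vertices together with $O(1)$ gadget vertices, while triangle-free configurations stay perfect. A natural candidate is a suitable blow-up of $G$ by a fixed small non-perfect graph, or a partial edge-subdivision augmented with carefully chosen cross-edges through the gadgets, engineered so that the three edges of a triangle cooperate to produce an induced odd cycle while any single edge or non-triangular structure does not.

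Property (iii) would follow from a packing argument on the hard triangle-freeness instance: the Ruzsa--Szemer\'edi graph has $\Omega(\epsilon n^2)$ pairwise edge-disjoint triangles, each contributing a distinct induced odd obstruction in $G'$ that overlaps only $O(1)$ others by locality of the gadget; since each obstruction requires at least one edge edit to destroy, the edit distance from $G'$ to a perfect graph is $\Omega(\epsilon\,|V(G')|^2)$, as required.

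The main obstacle is verifying (ii). Triangle-free graphs are not automatically perfect --- for instance $C_5$ is itself triangle-free but not perfect --- so the gadget must simultaneously (a) destroy every pre-existing induced odd hole and odd antihole of $G$ in passing to $G'$, and (b) avoid conjuring new odd obstructions out of longer, non-triangular structures of $G$ interacting with the gadget vertices. Establishing this requires invoking the Strong Perfect Graph Theorem of Chudnovsky--Robertson--Seymour--Thomas~\cite{CRST} and a systematic case analysis of putative induced odd cycles and odd antiholes in $G'$, tracing each back to an honest triangle in $G$; this structural analysis is where the combinatorial heart of the proof resides.
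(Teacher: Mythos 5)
Your plan has a genuine gap at its declared center: the gadget $\Phi$ is never specified, and the requirement (ii) that you yourself flag as the ``combinatorial heart'' --- that $\Phi(G)$ be perfect for \emph{every} triangle-free $G$ --- is in serious tension with (iii). A local, constant-size gadget attached to vertices and edges cannot see a long induced odd hole of $G$ (e.g.\ $G=C_7$, which is triangle-free but imperfect), yet it must destroy all such holes in passing to $\Phi(G)$; at the same time it must manufacture an induced odd hole out of every triangle. Operations that kill all long odd holes uniformly (such as subdividing every edge, which makes the graph bipartite) also kill the obstructions you need for (iii), and operations that preserve enough structure for (iii) tend to leave $C_5\subseteq G$ inducing something imperfect in $\Phi(G)$. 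So the proposal as written is a reduction template with its key component missing, and the missing component may not exist in the universal form you demand.

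The paper sidesteps exactly this difficulty by abandoning the ``for all $G$'' reduction. Using the fact that for extendable hereditary properties the canonical sampler (take $d$ random vertices, accept iff they induce a graph in $\mathcal{P}$) is within a quadratic factor of optimal, hardness of perfectness is witnessed by exhibiting a \emph{single} graph that is $\Omega(\epsilon)$-far from perfect while a random sample of $\mathrm{poly}(\epsilon^{-1})$ vertices is perfect with probability at least $1/2$. Theorem \ref{C5compper} builds such a graph from the hard (Behrend/Ruzsa--Szemer\'edi) triangle-freeness instance $T$: pass to a tripartite subgraph $F$ with parts $V_2,V_3,V_5$ retaining a constant fraction of the edge-disjoint triangles, add two independent sets $V_1,V_4$ of size $2n$, wire complete or empty bipartite graphs between the parts, and --- crucially --- put between $V_2$ and $V_5$ the \emph{complement} of $F$'s edges. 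Then each of the $\Omega(\epsilon n^2)$ edge-disjoint triangles of $F$ yields an induced $C_5$ (so the graph is far from induced $C_5$-free, hence far from perfect), while any vertex sample whose trace on $F$ is triangle-free induces a comparability graph with respect to the order $V_1<V_2<\cdots<V_5$, hence a perfect graph. Note that perfectness of the samples is certified via comparability and transitivity, not via the Strong Perfect Graph Theorem and a case analysis of odd holes, which your plan would require. If you want to rescue your approach, you should weaken (ii) to hold only for the specific hard instance (or for its sampled subgraphs) rather than for all triangle-free $G$, which is precisely the paper's move.
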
 

Indeed, Theorem \ref{C5compper} shows that from 
a graph on $n$ vertices which
is $14 \epsilon$-far from being triangle-free
but a random sample of $d$ vertices
is with probability at least $1/2$ triangle-free, we can construct a graph
on $5n$ vertices which is $\epsilon/25$-far from being induced $C_5$-free
but a random sample of $d$ vertices in it is a comparability graph  with
probability at least $1/2$. Since every comparability graph is perfect,
every perfect graph is induced $C_5$-free, and testing triangle-freeness
is hard, this implies the above theorem that testing perfectness is hard,
and further that testing for comparability graphs is hard.

\begin{theorem}\label{compmain}
Testing for comparability graphs is hard. 
\end{theorem}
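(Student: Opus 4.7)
The plan is to deduce Theorem \ref{compmain} as a short corollary of Theorem \ref{C5compper} combined with the hardness of testing triangle-freeness recalled in the introduction. First I would pin down an explicit hard instance for triangle-freeness: Behrend's construction, together with the universal-tester analysis already described, yields for every sufficiently small $\epsilon>0$ a graph $G$ on $n$ vertices that is $14\epsilon$-far from triangle-free yet whose induced subgraph on a uniformly random set of $d$ vertices is triangle-free with probability at least $1/2$, where $d$ grows super-polynomially in $\epsilon^{-1}$.

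Next I would feed $G$ into Theorem \ref{C5compper} to obtain a graph $G'$ on $5n$ vertices which is $\epsilon/25$-far from being induced $C_5$-free and on which a random sample of $d$ vertices induces a comparability graph with probability at least $1/2$. Since every comparability graph is perfect and $C_5$ is a minimal imperfect graph, no comparability graph contains $C_5$ as an induced subgraph. Hence any set of edge modifications that turns $G'$ into a comparability graph must in particular destroy every induced $C_5$ of $G'$, so $G'$ is at least $\epsilon/25$-far from the class of comparability graphs as well.

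To finish, I would invoke the hereditary nature of the class. Let $T$ be any one-sided $\epsilon/25$-tester for comparability with query complexity $q$. Because comparability graphs are closed under taking induced subgraphs, $T$ may reject only when its queries reveal an induced subgraph that is itself not a comparability graph. If $q \le d/2$, then $T$ inspects at most $d$ vertices of $G'$, and by the sampling property of $G'$ the induced subgraph on these vertices is a comparability graph with probability at least $1/2$; thus $T$ accepts $G'$ with probability at least $1/2$, violating the required rejection probability $2/3$. Therefore $q > d/2$, which is super-polynomial in $\epsilon^{-1}$, and so testing for comparability graphs is hard.

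The real obstacle lives entirely inside Theorem \ref{C5compper}, namely the design of a $5$-fold transformation that converts triangles in $G$ into a comparable number of induced $C_5$'s in $G'$ while simultaneously ensuring that the induced subgraph on a random vertex set of $G'$ almost surely falls inside the class of comparability graphs. Once that theorem is in hand, the derivation of Theorem \ref{compmain} is the compact sampling argument outlined above; the same reasoning of course also proves Theorem \ref{perfectmain}, since comparability graphs are a subfamily of perfect graphs and perfect graphs are likewise induced $C_5$-free.
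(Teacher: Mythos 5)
Your proposal is correct and follows the same route as the paper: construct a hard instance for triangle-freeness from Behrend's construction, push it through Theorem~\ref{C5compper} to get a graph far from induced $C_5$-free whose small random samples are almost surely comparability graphs, observe that comparability graphs are induced $C_5$-free (being perfect), and conclude via the canonical-tester lower bound. One small imprecision worth flagging: the sentence ``$T$ inspects at most $d$ vertices\dots and by the sampling property of $G'$ the induced subgraph on these vertices is a comparability graph with probability at least $1/2$'' does not literally follow, because an adaptive tester's touched vertices need not be a uniformly random $d$-subset; this is exactly the point handled by the Goldreich--Trevisan canonical-tester theorem (cited as \cite{GT} in the introduction, and implicit in the paper's claim that the random-sample tester is as accurate as any tester of query complexity at most $d/2$), so the gap is easily closed by invoking that result explicitly.
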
 

In the next section, we show that induced $P_3$-freeness is easily
testable. In Section \ref{perfsect}, we show that testing perfectness is
at least as hard as testing triangle-freeness, which is hard. We finish
with some concluding remarks. Throughout the paper, we systematically
omit floor and ceiling signs whenever they are not crucial for the sake
of clarity of presentation. We also do not make any serious attempt to
optimize absolute constants in our statements and proofs.

\section{Induced $P_3$-freeness is easily testable} 

A {\it cut} for a graph $G=(V,E)$ is a partition $V=V_1 \cup V_2$
into nonempty subsets such that there are no edges between $V_1$ and
$V_2$ or $V_1$ is complete to $V_2$. The following definition is a
natural relaxation of a cut. For $\beta>0$, define a {\it $\beta$-cut}
for a graph $G=(V,E)$ as a partition $V=V_1 \cup V_2$ into nonempty
subsets such that $e(V_1,V_2) \leq \beta|V_1||V_2|$ or $e(V_1,V_2) \geq
(1-\beta)|V_1||V_2|$. For a graph $G$ and vertex subset $S$, let $G[S]$
denote the induced subgraph of $G$ with vertex set $S$. Let $c(\beta,n)$
be the least $\delta$ for which there is a graph $G=(V,E)$ on $n$ vertices
which has no $\beta$-cut and has $\delta n^4$ induced copies of $P_3$.

\begin{theorem}
We have $c(\beta,n) \geq (\beta/100)^{12}$. 
\end{theorem}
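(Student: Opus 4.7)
The plan is to show that the no-$\beta$-cut hypothesis forces a rich local mixing structure, and that this mixing forces many induced $P_3$'s. The argument proceeds by applying the cut condition to several nested vertex partitions, each application contributing a polynomial factor in $\beta$; the exponent $12$ reflects the number of such applications together with symmetry losses.

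\textbf{Step 1 (Degree regularity).} For each $v$, apply the no-$\beta$-cut condition to the partition $(\{v\}, V \setminus \{v\})$. Since $e(\{v\}, V \setminus \{v\}) = d(v)$, we obtain $\beta(n-1) < d(v) < (1-\beta)(n-1)$. In particular $|N(v)|, |\bar N(v)| \geq \beta(n-1)$, where $\bar N(v) := V \setminus (N(v) \cup \{v\})$.

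\textbf{Step 2 (Neighborhood mixing).} For each vertex $v$, apply no-$\beta$-cut to the partition $(N[v], V \setminus N[v])$, with $N[v] = N(v) \cup \{v\}$. Because $v$ has no edges outside $N[v]$, the cut count equals $e(N(v), \bar N(v))$, and lies between $\beta |N[v]||\bar N(v)|$ and $(1-\beta)|N[v]||\bar N(v)|$. Using Step~1, this gives at least $\Omega(\beta^3 n^2)$ edges from $N(v)$ to $\bar N(v)$, and equally many non-edges. Each edge $uw$ with $u \in N(v), w \in \bar N(v)$ produces an induced $P_2$ of the form $v$--$u$--$w$.

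\textbf{Step 3 (Counting induced $P_2$'s).} Summing the estimate from Step~2 over $v$, and dividing by the symmetry factor of $2$ (each induced $P_2$ has two endpoints), we obtain at least $c\,\beta^3 n^3$ induced copies of $P_2$ for an absolute constant $c$.

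\textbf{Step 4 (Extending a $P_2$ to a $P_3$).} This is the crux. An ordered induced $P_2$ $(a,b,c)$ extends to an ordered induced $P_3$ $(a,b,c,d)$ precisely when $d \in N(c) \setminus (N(a) \cup N(b) \cup \{a,b,c\})$. The plan is to show that on average over ordered induced $P_2$'s the number of such $d$ is $\Omega(\beta^{O(1)} n)$. To do this, I would apply the no-$\beta$-cut condition to several auxiliary partitions, in particular $(N(c), V \setminus N(c))$ and $(N(a) \cup N(b) \cup \{a,b\}, V \setminus (N(a) \cup N(b) \cup \{a,b\}))$, and combine them via inclusion--exclusion. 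A likely contrapositive route is: if the set of ``bad'' ordered $P_2$'s (those with almost no extensions) were too large, one could aggregate them into a weighted cut with density outside $(\beta, 1-\beta)$, contradicting the hypothesis. Repeated nested applications of the cut condition, one per neighborhood or complementary neighborhood involved, accumulate the factors of $\beta$ that add up to the final exponent $12$.

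\textbf{Step 5 (Conclusion).} Combining Steps 3 and 4 and dividing by the symmetry factor of $2$ (each induced $P_3$ has two ordered realizations $(a,b,c,d)$ and $(d,c,b,a)$), we obtain at least $(\beta/100)^{12} n^4$ induced copies of $P_3$, as required.

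\textbf{Main obstacle.} Step~4 is by far the hardest. The no-$\beta$-cut condition is a \emph{global} cut-density statement, whereas the number of valid extensions of a given $(a,b,c)$ depends on a very specific nested set $N(c) \setminus (N(a) \cup N(b))$. Translating global mixing into information about this intersection structure, without losing more than a polynomial factor in $\beta$, is the key technical challenge; all the other steps are essentially direct applications of the definition together with routine bookkeeping.
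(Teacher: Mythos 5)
Your proposal takes a genuinely different route from the paper. The paper does \emph{not} count induced $P_3$'s directly; instead it derives a contradiction from random sampling. Concretely, it samples a random set $R = S \cup T$ of $r = (8\delta)^{-1/4}$ vertices with $|S|=|T|$ and conditions on four high-probability events: $G[R]$ is induced $P_3$-free, $G[S]$ has all degrees in $[\alpha(s-1),(1-\alpha)(s-1)]$ with $\alpha = \beta/2$, few vertices of $V$ are complete or empty to $S$, and $T$ avoids all such vertices. The degree bound forces $G[S]$ to have at most $2^{\alpha^{-1}}$ cuts. Then, by a union bound over these cuts, it shows that with positive probability, for every cut $(S_1,S_2)$ of $G[S]$, either $T$ contains a vertex that destroys the cut in $G[R]$, or $T$ contains a nonadjacent pair $v_1,v_2$ complete to $S_2$ (resp.\ $S_1$) which, together with suitable $s_1\in S_1, s_2 \in S_2$, forms an induced $P_3$ in $G[R]$ --- contradicting $P_3$-freeness of $G[R]$. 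The no-$\beta$-cut hypothesis enters twice: once via degree bounds (as in your Step~1) and once globally to bound $|V_0|$ or to produce $\Omega(\beta^2 n^2)$ missing edges between $V_1$ and $V_2$ for each cut.

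Your Steps 1--3 are correct and routine, but your Step~4 is a genuine gap, which you yourself flag. The difficulty is not merely technical: the no-$\beta$-cut condition constrains only bipartitions of $V$, while the extension count $|N(c)\setminus(N(a)\cup N(b)\cup\{a,b,c\})|$ depends jointly on a triple of vertices. Your ``aggregate bad $P_2$'s into a weighted cut'' idea is not spelled out, and it is unclear how to encode a family of triples as a single vertex bipartition without losing the contradiction; applying the cut condition separately to $N(c)$ and to $N(a)\cup N(b)\cup\{a,b\}$ controls the sizes of these sets but not their intersection. A cautionary example: $K_{n/2,n/2}$ has $\Theta(n^3)$ induced paths of length two and no induced $P_3$ at all, so there is no passage from ``many $P_2$'s'' to ``many $P_3$'s'' that does not re-invoke the no-$\beta$-cut hypothesis in an essential, structural way --- exactly what the paper achieves via the cograph cut structure of $G[S]$ rather than via pointwise neighborhood intersections. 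To complete your approach you would need a concrete mechanism, and the announced exponent budget ($\beta^3$ from Steps 1--3, leaving $\beta^9$ to be produced in Step~4) is not substantiated. As written, the proposal is a plausible-sounding sketch, not a proof.
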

\begin{proof}
Suppose for contradiction that there is a graph $G$ on $n$ vertices which
does not have a $\beta$-cut and has less than $\delta n^4$ induced copies
of $P_3$, where $\delta=(\beta/100)^{12}$. Since $G$ has no $\beta$-cut,
then $G$ contains an induced $P_3$. Hence, $1 \leq \delta n^4$ and $n
\geq \delta^{-1/4} \geq (100/\beta)^3$.

 Since $G$ has at most $\delta n^4$ induced copies of $P_3$, a random
 sample of $r=(8\delta)^{-1/4} \geq 10^5\beta^{-3}$ vertices has in
 expectation at most $\delta r^4=1/8$ induced copies of $P_3$. Hence,
 with probability at least $7/8$, a random sample of $r$ vertices contains
 no induced $P_3$.

Randomly sample a set $R=S \cup T$ of $r=s+t$ vertices from $V$, where
$s=t=r/2$. Let $E_0$ be the event that $G[R]$ is induced $P_3$-free,
so the probability of event $E_0$ is at least $7/8$.

Since $G$ does not have a $\beta$-cut, each vertex has more than
$\beta(n-1)$ neighbors and less than $(1-\beta)(n-1)$ neighbors. Let
$\alpha=\beta/2$. Hoeffding  (see Section 6 of \cite{Ho}) proved
that the hypergeometric distribution is at least as concentrated as
the corresponding binomial distribution. Thus, by the Azuma-Hoeffding
inequality (see, e.g., \cite{AlSp}), and the fact that each vertex $v \in
S$ has more than $\beta(n-1)$ neighbors, the probability that a particular
$v \in S$ has less than $\alpha (s-1)$ neighbors in $S$ is a most
$$e^{-\left((\beta-\alpha)(s-1)\right)^2/(2(s-1))}
=e^{-(\beta-\alpha)^2(s-1)/2}
\leq e^{-\beta^2 s/16} \leq \frac{1}{16s}.$$ Similarly, the probability
that $v$ has more than $(1-\alpha)(s-1)$ neighbors in $S$ is at most
$\frac{1}{16s}$. Let $E_1$ be the event that every vertex in $S$ has at
least $\alpha(s-1)$ and at most $(1-\alpha)(s-1)$ neighbors in $S$, i.e.,
the induced subgraph $G[S]$ has minimum degree at least $\alpha(s-1)$
and maximum degree at most $(1-\alpha)(s-1)$. By the union bound, the
probability of event $E_1$ is at least $1-2s \cdot \frac{1}{16s}=7/8$.

Let $U$ be the set of vertices $v \in V \setminus S$ which are complete
or empty to $S$. As the degree of each vertex of $G$ is at least $\beta(n-1)$ and at most $(1-\beta)(n-1)$, the probability that for a given vertex $v$, a random subset of $s$ vertices of $V \setminus \{v\}$ are all neighbors of $v$ or all nonneighbors of $v$ is at most $2(1-\beta)^s$. Hence, a given vertex has probability at most $2(1-\beta)^s$ of being in $U$. By linearity of expectation, 
the expected size of $U$ is at most $2(1-\beta)^s n$. Let $E_2$
be the event that $|U| \leq 16(1-\beta)^s n \leq 16e^{-\beta s}n \leq
\frac{\beta}{8}n$. By Markov's inequality, the probability of $E_2$
is at least $1-1/8=7/8$.

Let $E_3$ be the event that $T$ contains no vertex from $U$. By linearity of expectation, 
$\mathbb{E}[|U \cap T|] = \mathbb{E}[|U|]t/n \leq 2(1-\beta)^s t \leq 2e^{-\beta s}t \leq
\frac{1}{8}$. Therefore, event $E_3$ occurs with probability at least
$7/8$.

The probability that events $E_0$ and $E_1$ both occur is at least
$7/8-1/8=3/4$. If both of these events occur, then $G[S]$ has at least
one and at most $2^{\alpha^{-1}}$ cuts. Consider such a cut $S=S_1 \cup
S_2$ of $G[S]$, and suppose $S_1$ is complete to $S_2$ (the case $S_1$
is empty to $S_2$ can be treated similarly). For each such cut, consider
the partition $V \setminus S = U \cup V_0 \cup V_1 \cup V_2$  of vertices,
where $v \in V \setminus S$ satisfies $v \in V_0$ if $v \not \in U$ and it
is not complete to $S_1$ and not complete to $S_2$, 
$v \in V_1$ if it is complete to $S_2$
but not complete to $S_1$, and $v \in V_2$ if it is complete to $S_1$
but not to $S_2$.

Note that if $T$ contains a vertex from $V_0$, then the cut $S=S_1
\cup S_2$ of $G[S]$ does not extend to a cut of $G[R]$. If events $E_i$
for $i=0,1,2,3$ occur, which happens with probability at least $1/2$,
then $G[R]$ is induced $P_3$-free, so it has a cut, and no vertex in $T$
is complete or empty to $S$. In this case one of the cuts of $G[S]$
extends to a cut of $G[R]$, and hence, for at least one cut of $G[S]$, 
no vertex of $T$ is in the corresponding $V_0$.

We now condition on the occurrence of events $E_i$ for $i=0,1,2,3$. Note that
since the probability that this happens is at least $1/2$, for any other 
event $E$, the conditional probability that $E$ occurs given that
$E_i$ occur for $i=0,1,2,3$ is at most twice the probability of $E$
without any conditioning.

To complete the proof we claim that with positive probability 
$E_0,E_1,E_2,E_3$ occur and yet
the induced
subgraph on $S \cup T$ contains an induced $P_3$, contradicting
$E_0$. To do so  we apply the union bound over all cuts in $G[S]$
to show that with positive probability, for each such cut, either
$T$ contains a vertex of $V_0$ (and hence the cut cannot  be extended
to one in $G[R]$) or $T$ contains a vertex $v_1$ in $V_1$  
and a vertex $v_2$ in $V_2$,
which are nonadjacent, providing an induced $P_3$ in $G[S \cup T]$
on the vertices  
$v_1,v_2$ together with a
vertex $s_1 \in S_1$ not adjacent to $v_1$ and a vertex $s_2 \in S_2$
not adjacent to $v_2$.

We proceed with the proof of this claim. Conditioning on
$E_i$ for $i=0,1,2,3$, fix a cut $(S_1,S_2)$ in $G[S]$ and let
$V_0,V_1,V_2$ be as above. Consider two possible cases.
\vspace{0.3cm}

\noindent
Case 1: $|V_0| \geq \frac{2}{\alpha t} n$.

In this case, the probability that $T$  contains no vertex of $V_0$
is at most 
$$
(1-\frac{2}{\alpha t})^t \leq e^{-2/\alpha}<2^{-\alpha^{-1}-1},
$$ 
showing that even after our conditioning the probability
of this event is smaller than $2^{-\alpha^{-1}}$.
\vspace{0.3cm}

\noindent
Case 2: $|V_0| < \frac{2}{\alpha t} n \leq \frac{\beta}{8}n $.

Let $x=|U|+|V_0|$, $y=|S_1| + |V_1|$, and $z=|S_2| + |V_2|$, so
$x+y+z=n$. Assume without loss of generality that $y \leq z$. Since
the partition $V=(S_1 \cup V_1) \cup (S_2 \cup V_2 \cup U \cup V_0)$
is not a $\beta$-cut, there are at least $\beta y (z+x)$ missing edges
between these two sets. Since, in addition,  $S_1$ is complete to $S_2$,
$S_1$ is complete to $V_2$, and $V_1$ is complete to $S_2$, then these
missing edges go between $V_1$ and $V_2$ and between $S_1 \cup V_1$
and $U \cup V_0$.  Thus $$\frac{\beta}{2}yn \leq \beta y (z+x) \leq
|V_1||V_2|-e(V_1,V_2)+yx.$$ If events $E_i$ for $i=0,1,2,3$ occur, then $x
\leq \frac{\beta}{4}n$, and hence there are at least $\frac{\beta}{4}yn$
missing edges between $V_1$ and $V_2$.  In this case, every vertex of
$S_1$ is complete to $S_2 \cup V_2$, and hence $$(1-\beta)(n-1) \geq
z =n-x-y \geq n-\frac{\beta}{4}n-y$$ and  $$y \geq \frac{3\beta}{4}n-1
\geq \frac{\beta}{2}n.$$ Thus, the number of missing pairs between $V_1$
and $V_2$ in the case events $E_i$ for $i=0,1,2,3$ occur is at least
$\frac{\beta}{4}yn \geq \frac{\beta^2}{8}n^2$.

Let $E_4$ be the event that $T$ contains the two vertices of at least
one of the nonedges between $V_1$ and $V_2$. Given that there are at
least $\frac{\beta^2}{8}n^2$ edges missing between $V_1$ and $V_2$,
the probability that event $E_4$ occurs is at least the probability
that at least one of $t/2$ random 
pairs of vertices of $G$ contains one of the nonedges
between $V_1$ and $V_2$.  The probability that this does {\em not} happen
is at most
$$\left(1-\frac{\beta^2
n^2/8}{{n \choose 2}}\right)^{t/2} \leq e^{-\beta^2 t/8}
=e^{-\beta^2 10^5/(8 \cdot 2  \beta^3)}
=e^{-10^5 /(32 \alpha)}<2^{-\alpha^{-1}-1},
$$ 
and hence even after our conditioning the probability
of this event is smaller than $2^{-\alpha^{-1}}$.

By the union bound it now
follows that with positive probability $E_i$ for $i=0,1,2,3$ occur
and yet $G[S \cup T]$ contains an induced $P_3$. This is
a contradiction, completing the proof.  \end{proof}

Let $f(\epsilon,n)$ be the least $\delta$ for which there is a graph
$G=(V,E)$ on $n$ vertices which is $\epsilon$-far from being induced
$P_3$-free and has $\delta n^4$ induced copies of $P_3$.

\begin{theorem}\label{p3removal}
There is $n_0 \geq \epsilon n$ such that $f(\epsilon,n) 
\geq c(\epsilon,n_0)\epsilon^4 \geq (\epsilon/100)^{16}$. 
\end{theorem}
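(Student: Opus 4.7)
The plan is to find, inside any extremal graph $G$ for $f(\epsilon,n)$, a large induced subgraph with no $\epsilon$-cut, and then invoke the previous theorem on it. Concretely, I would build a recursive partition of $V(G)$ by starting with $V$ and, as long as some current piece $V'$ with $|V'|\geq\epsilon n$ admits an $\epsilon$-cut $(V_1,V_2)$, replacing $V'$ by $V_1$ and $V_2$. Each split is non-trivial, so the process terminates.

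At termination, either (a) some terminal piece $V'$ has size $n_0\geq\epsilon n$ and no $\epsilon$-cut, or (b) every terminal piece has size $<\epsilon n$. In case (a), applying the previous theorem to $G[V']$ produces at least $c(\epsilon,n_0)n_0^4\geq c(\epsilon,n_0)\epsilon^4 n^4$ induced copies of $P_3$ in $G$, yielding $f(\epsilon,n)\geq c(\epsilon,n_0)\epsilon^4$; combined with $c(\epsilon,n_0)\geq(\epsilon/100)^{12}$ this gives the stated bound $f(\epsilon,n)\geq(\epsilon/100)^{16}$.

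To rule out case (b), I would derive a contradiction by exhibiting an induced $P_3$-free graph within fewer than $\epsilon n^2$ edits of $G$. For each $\epsilon$-cut $(V_1,V_2)$ used in the recursion, edit the bipartite cross-edges so that $V_1$ is either complete or empty to $V_2$, whichever is closer (cost at most $\epsilon|V_1||V_2|$). For each terminal leaf $L$, delete its internal edges (cost $\binom{|L|}{2}$). Reading the recursion tree from the leaves up, each internal node then represents either a disjoint union or a join of cographs, starting from edgeless leaves, so by Seinsche's characterization the result is a cograph (induced $P_3$-free). Since each unordered pair of vertices of $G$ is separated by exactly one cut in the recursion tree, $\sum_{\text{cuts}}|V_1||V_2|=\binom{n}{2}-\sum_{\text{leaves}}\binom{|L|}{2}$, and so the total edit cost is at most
$$\epsilon\binom{n}{2}+\sum_{\text{leaves}}\binom{|L|}{2} \;<\; \tfrac{\epsilon}{2}n^2 + \tfrac{1}{2}\max_L|L|\cdot n \;<\; \epsilon n^2,$$
using $\max_L|L|<\epsilon n$ in case (b); this contradicts the $\epsilon$-farness of $G$.

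The main structural input is Seinsche's dichotomy: every induced $P_3$-free graph on $\geq 2$ vertices is a disjoint union or a join, so an $\epsilon$-cut is a natural relaxation, and the recursion tree mimics the cotree of a cograph. Beyond this, the proof is essentially bookkeeping, and the step requiring the most care is showing the edit count is \emph{strictly} less than $\epsilon n^2$ --- which is precisely why the size threshold for splitting is chosen at $\epsilon n$ and the cut tolerance at an $\epsilon$-fraction. I expect no serious obstacles beyond tracking these constants cleanly.
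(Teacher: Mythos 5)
Your proposal follows essentially the same argument as the paper: recursively refine along $\epsilon$-cuts, show that if every terminal piece were smaller than $\epsilon n$ then the accumulated edit cost (cut edits plus making each leaf edgeless) would be below $\epsilon n^2$, contradicting farness, and otherwise apply the $c(\beta,n)$ bound to a large $\epsilon$-cut-free piece. The only cosmetic difference is your use of the exact identity $\sum_{\text{cuts}}|V_1||V_2|+\sum_{L}\binom{|L|}{2}=\binom{n}{2}$ where the paper simply bounds $\sum_{\text{cuts}}|V_1||V_2|\le\binom{n}{2}$ and then subtracts; both yield the same conclusion.
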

\begin{proof} 
Let $G=(V,E)$ be a graph on $n$ vertices which is $\epsilon$-far
from being induced $P_3$-free. Partition $V$ into two parts along an
$\epsilon$-cut, and continue refining parts along $\epsilon$-cuts of the
subgraphs induced by the parts until no part has an $\epsilon$-cut, and
let $V=V_1 \cup \ldots \cup V_k$ be the resulting partition. We modify
edges along these $\epsilon$-cuts to turn them into cuts, letting $G'$
be the resulting graph. The total fraction of pairs of vertices changed
in making $G'$ from $G$ is at most $\epsilon$, so at least $\epsilon
n^2-\epsilon{n \choose 2} \geq \epsilon n^2/2$ edges must be changed from
the resulting graph $G'$ to make it induced $P_3$-free. We can modify
edges in each $V_i$ to make it induced $P_3$-free, and the resulting graph
on $V$ is induced $P_3$-free. If $|V_i| \leq \epsilon n$ for $1 \leq i
\leq k$, then the number of edge modifications made to $G'$ to obtain an
induced $P_3$-free graph is at most $$\sum_{i=1}^k {|V_i| \choose 2} \leq
\frac{n}{2}\max_{1 \leq i \leq k} (|V_i|-1) < \frac{\epsilon n^2}{2},$$
 a contradiction. Thus, one of the parts $V_i$, call it $V_0$,
 has $n_0 > \epsilon n$
 vertices, and $G[V_0]$ has no $\beta$-cut. Therefore, the induced
 subgraph $G[V_0]$, and hence $G$, has at least $$c(\epsilon,n_0)n_0^4
 \geq c(\epsilon,n_0)\epsilon^4 n^4 \geq (\epsilon/100)^{12}\epsilon^4
 n^4 \geq (\epsilon/100)^{16}n^4$$ induced copies of $P_3$, completing
 the proof.
\end{proof}

Consider the following one-sided $\epsilon$-tester for induced
$P_3$-freeness. Let $\delta= (\epsilon/100)^{16}$. The algorithm samples
$t = 2\delta^{-1}$ quadruples of vertices uniformly at random,  and
accepts if none of them form an induced $P_3$, and otherwise rejects. Any
induced $P_3$-free graph is clearly accepted. If a graph is $\epsilon$-far
from being induced $P_3$-free, then it contains at least $\delta n^4$
induced $P_3$ by Theorem \ref{p3removal}, and the probability that none of
the sampled quadruples forms an induced $P_3$ is at most $(1 - \delta)^t <
1/3$. Note that the query complexity for this algorithm depends linearly
on $\delta^{-1}$, and hence polynomially on $\epsilon^{-1}$, completing
the proof of Theorem \ref{thmp3}. \qed

\section{Testing perfectness} \label{perfsect}

We first observe a couple of equivalent versions of the triangle removal
lemma. The {\it triangle edge cover number} $\nu(G)$ of a graph $G$
is the minimum number of edges of $G$ that cover all triangles in $G$,
i.e., it is the minimum number of edges of $G$ whose deletion makes
$G$ triangle-free. The triangle removal lemma thus says that for each
$\epsilon>0$ there is $\delta>0$ such that every graph on $n$ vertices
with at most $\delta n^3$ triangles satisfies $\nu(G) \leq \epsilon n^2$.

The {\it triangle packing number} $\tau(G)$ of a graph $G$ is the maximum
number of edge disjoint triangles in $G$. The following simple bounds hold
for all graphs: $$\tau(G) \leq \nu(G) \leq 3\tau(G).$$ Indeed, at least
one edge from each of the edge-disjoint triangles is needed in any edge
cover of the triangles in $G$, and deleting the $3\tau(G)$ edges from
a maximum collection of edge-disjoint triangles leaves a triangle-free
graph.  We remark that a well known conjecture of Tuza states that the
upper bound can be improved to $\nu(G) \leq 2\tau(G)$. Haxell \cite{Ha}
improved the upper bound factor to $3-\frac{3}{23}$.

Thus, up to a constant factor change in $\epsilon$, the triangle
removal lemma is the same as saying that a graph $G$ on $n$ vertices
with at least $\epsilon n^2$ edge disjoint triangles contains at least
$\delta n^3$ triangles. We can further suppose, up to a constant factor
change in $\epsilon$, that $G$ is tripartite. Indeed, every graph has
a tripartite subgraph which contains at least $2/9$ of the triangles
in a maximum collection of edge-disjoint triangles. This can be seen by
considering a uniform random tripartition. Each triangle has probability
$2/9$ of having one vertex in each part, so the expected number of the
edge-disjoint triangles in the tripartition is $2/9$ of the total, and
there is a tripartition for which the number of edge-disjoint triangles
is at least the expected number. We may thus assume $G$ is tripartite.

\begin{theorem}\label{C5compper}
Let $T$ be a graph on $n$ vertices which is $14\epsilon$-far from
being triangle-free such that a random sample of $d$ vertices of $T$ is
triangle-free with probability at least $1/2$. Then there is a graph $G$
on $5n$ vertices which is $\epsilon/25$-far from being induced $C_5$-free,
such that a random sample of $d$ vertices of $G$ is a comparability
graph with probability at least $1/2$.
\end{theorem}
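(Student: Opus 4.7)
My plan is first to reduce to the case where $T$ is tripartite, using the $2/9$ random-tripartition argument recalled just before the theorem: at a constant cost (absorbed into the ``$14$'' in $14\epsilon$), assume $T$ is tripartite with a fixed 3-coloring $\mathrm{col}\colon V(T)\to\{A,B,C\}$. I will then construct $G$ as the cyclic $5$-blow-up of $T$: set $V(G)=V(T)\times\mathbb{Z}_5$ (so $|V(G)|=5n$), and put $(v,i)$ adjacent to $(u,j)$ in $G$ iff $uv\in E(T)$ and $j-i\equiv\pm 1\pmod 5$. A direct check shows that every triangle $\{a,b,c\}$ of $T$ (with one vertex per color) lifts to an induced $C_5$ in $G$ on $\{(a,0),(b,1),(c,2),(a,3),(b,4)\}$, and that these lifts can be made edge-disjoint whenever the parent triangles are.

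For the distance claim, I will combine the $14\epsilon$-far hypothesis with the bounds $\tau\le\nu\le 3\tau$ on triangle packing and cover numbers, together with the $2/9$ tripartite argument, to extract $\Omega(\epsilon n^2)$ edge-disjoint triangles in $T$; lifting these yields $\Omega(\epsilon n^2)$ edge-disjoint induced $C_5$'s in $G$, forcing at least $\epsilon n^2=\epsilon(5n)^2/25$ edge-modifications to destroy them all. This gives the required $\epsilon/25$-farness of $G$ from induced $C_5$-freeness.

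For the sampling claim, a uniform $d$-sample $S\subseteq V(G)$ projects under $(v,i)\mapsto v$ to a near-uniform $d$-sample $\pi(S)$ of $V(T)$, so by the hypothesis $T[\pi(S)]$ is triangle-free with probability essentially $1/2$. Conditional on this event, I will exhibit a transitive orientation of $G[S]$, showing that $G[S]$ is a comparability graph. The orientation combines a linear order $A<B<C$ on $T$'s color classes with the $\mathbb{Z}_5$-coordinate of each sampled vertex; the point is that every directed length-$2$ path in this orientation either has non-adjacent endpoints in $G$ (so no closing edge is demanded) or would demand a closing edge corresponding to the third side of a triangle in $T[\pi(S)]$, which cannot exist under triangle-freeness.

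The main obstacle is that $T$ being tripartite does not preclude $C_5$'s in $T[\pi(S)]$, and such cycles can in principle lift to induced $C_5$'s in $G[S]$ that obstruct any transitive orientation. I plan to handle this with a secondary probabilistic argument: for a given $C_5$ in $T$ to produce an induced $C_5$ in $G[S]$, the $\mathbb{Z}_5$-coordinates of the five sampled vertices must realize one of the ten specific cyclic ``aligned'' patterns, an event of probability $O(5^{-4})$ per $C_5$; a union bound over the $C_5$'s of $T$ (whose count is controlled by the sparsity inherent in the triangle-hardness regime) then absorbs this contribution into the probability slack, yielding the claimed lower bound of $1/2$ on the probability that $G[S]$ is a comparability graph.
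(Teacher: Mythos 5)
Your construction (a cyclic $\mathbb{Z}_5$-blow-up of $T$) is genuinely different from the paper's, but it has a structural flaw that I do not believe can be repaired by the probabilistic patch you sketch.

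The decisive problem is that the cyclic $5$-blow-up $G$ is \emph{always triangle-free}: a triangle $(v_1,i_1),(v_2,i_2),(v_3,i_3)$ would require $i_2-i_1,\,i_3-i_2,\,i_1-i_3\equiv\pm1\pmod 5$ with the three differences summing to $0$, but $\pm1\pm1\pm1\in\{-3,-1,1,3\}\pmod 5$, never $0$. A triangle-free graph is a comparability graph if and only if it is bipartite (a transitive orientation of a triangle-free graph has no directed $P_3$, so every vertex is a source or a sink). Hence $G[S]$ is a comparability graph exactly when $G[S]$ is bipartite, which in turn requires $T[\pi(S)]$ to contain no odd cycle that lifts coherently --- far more than triangle-freeness, which is all the hypothesis provides. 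Your attempted transitive orientation (color order $A<B<C$ combined with the $\mathbb{Z}_5$-coordinate) cannot exist unless $G[S]$ is bipartite, so the ``exhibit a transitive orientation'' step is unfixable as stated.

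The probabilistic patch also fails quantitatively. You argue that a given $C_5$ of $T$ lifts to an induced $C_5$ of $G[S]$ with probability $O(5^{-4})$ and then take a union bound over the $C_5$'s of $T$. But $T$ is dense (it has at least $14\epsilon n^2$ edges since it is $14\epsilon$-far from triangle-free), so it typically contains $\Theta(n^5)$ copies of $C_5$ (and many longer odd cycles, which also destroy bipartiteness of $G[S]$). The expected number of lifted odd cycles in $G[S]$ is then on the order of $d^5/5^4$ or larger, which blows up precisely in the regime where $d$ is large, i.e.\ the regime of interest. The claim that the count is ``controlled by the sparsity inherent in the triangle-hardness regime'' is not justified and in fact false: hard triangle-freeness instances are dense and riddled with longer odd cycles.

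There is also a secondary issue with the distance claim. You invoke edge-disjointness of the lifted $C_5$'s, but for induced $C_5$-freeness one destroys an induced $C_5$ by modifying any \emph{pair} inside it, including adding a chord. Two edge-disjoint triangles of $T$ sharing a vertex $a$ lift (under your fixed pattern) to $C_5$'s that share the non-adjacent pair $\{(a,0),(a,3)\}$; adding that single chord kills all such $C_5$'s at once, and a vertex can lie in up to $n$ of the edge-disjoint triangles. The paper avoids this by greedily choosing, for each triangle, fresh vertices from two extra parts $V_1,V_4$ of size $2n$, so that the resulting induced $C_5$'s pairwise share at most one vertex. More importantly, the paper's five-part construction is \emph{not} a blow-up: it uses a linear arrangement $V_1<V_2<V_3<V_4<V_5$, placing the complement of $F$ between $V_2$ and $V_5$, so that the natural ``orient low to high'' rule is transitive except precisely on triples coming from triangles of $F$. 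That linear (acyclic) structure is what makes the comparability claim go through, and your cyclic structure is what breaks it.
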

\begin{proof}
By the remarks above, $T$ contains a tripartite subgraph $F$ which
contains at least $\frac{1}{3} \cdot \frac{2}{9} \cdot 14 \epsilon
n^2>\epsilon n^2=(\epsilon/25) (5n)^2$ edge-disjoint triangles. 
Denote the three parts of $F$ by $V_2,V_3,V_5$.  

Let $G=(V,E)$ be the graph on $5n$ vertices with partition $V=V_1 \cup
V_2 \cup V_3 \cup V_4 \cup V_5$, where $V_1$ and $V_4$ are of size $2n$ each,
and $V_2,V_3,V_5$ are the parts of $F$. We next specify the edges
between the various parts of $G$. Each part $V_i$, $1 \leq i \leq 5$,
is an independent set. There are no edges between $V_1$ and $V_2$,
between $V_1$ and $V_3$, between $V_3$ and $V_4$, and between $V_4$
and $V_5$. There is a complete bipartite graph between $V_1$ and $V_4$,
between $V_1$ and $V_5$, and between $V_2$ and $V_4$. The edges of $G$
are precisely the edges of $F$ between $V_2$ and $V_3$, and between
$V_3$ and $V_5$. Finally, between $V_2$ and $V_5$, the edges of $G$
are precisely the nonedges of $F$.

Arbitrarily order $T_1,\ldots,T_t$ a maximum collection of $t=\tau(F) \geq
\epsilon n^2$  edge-disjoint triangles in $F$. As $F$ is a tripartite
graph on $n$ vertices, $t=\tau(F)$ is at most the product of the two
smallest parts, which is at most $n^2/9$. For every triangle in $F$, the
same three vertices in $G$ with a vertex in $V_1$ and a vertex in $V_4$
form an induced $C_5$.  We next show that this implies that there are $t$
induced copies of $C_5$ in $G$, labeled $L_1,\ldots,L_t$, such that each
pair intersects in at most one vertex. In fact, we greedily construct
$L_1,\ldots,L_t$ so that they further satisfy that the vertex set of each
$L_i$ consists of the vertices of $T_i$ together with a vertex in $V_1$
and a vertex in $V_4$.

Suppose we have already constructed $L_j$ for $j<i$ satisfying the
desired properties. We next show how to construct $L_i$ with the
desired properties. Note that in a tripartite graph, the number of
edge-disjoint triangles containing a given vertex $v$ is at most the
minimum order of the two parts not containing $v$. It follows that
$T_i$ has nonempty intersection with at most $n$ of the $t$ triangles
$T_1,\ldots,T_t$. Hence, for $h=1,4$, at most $n$ vertices in $V_h$ are
in at least one $L_j$ with $j<i$ for which $T_j$ and $T_i$ share a vertex
in common. For $h=1,4$, delete these vertices from $V_h$, and denote
the resulting subset of $V_h$ as $V_h'$, so $|V_h'| \geq |V_h|-n =n$.
As $i-1<t<n^2 \leq |V_1'||V_4'|$, there is a pair $(v_1,v_4) \in V_1'
\times V_4'$ that is not in any $L_j$ with $j<i$. We pick $L_i$ to
be the induced $C_5$ in $G$ with vertices $v_1,v_4$ and the vertices
of $T_i$. It is clear from this construction that $L_i$ intersects
each $L_j$ with $j<i$ in at most one vertex. We therefore can greedily
construct the desired $t$ induced copies of $C_5$, and conclude that $G$
is $\epsilon/25$-far from being induced $C_5$-free.

On the other hand, the only triples $a<b<c$ of vertices in a linear
ordering which puts the vertices in $V_i$ before $V_j$ if $i<j$ with $a$
adjacent to $b$, $b$ adjacent to $c$, and $a$ not adjacent to $c$ are with
$a \in V_2$, $b \in V_3$, and $c \in V_5$ the vertices of a triangle
in $F$. Thus, by sampling $d$ vertices uniformly at random from $G$,
we sample at most $d$ vertices uniformly at random from $F$. These at
most $d$ vertices are triangle-free in $F$ with probability at least
$1/2$, and hence the $d$ random vertices in $G$ form a comparability
graph with probability at least $1/2$. This completes the proof.
\end{proof}

As discussed toward the end of the introduction, Theorem \ref{C5compper}
implies Theorem \ref{perfectmain}  that testing perfectness is hard,
and Theorem \ref{compmain} that testing for comparability graphs is hard.

A partially ordered set (poset) is a 
directed graph on a vertex set $P$ which
\begin{itemize} 
\item has no loops, i.e., no pair $(x,x)$ is an edge, 
\item has no antiparallel edges, 
i.e., if $(x,y)$ is an edge, then $(y,x)$ is not an edge, 
\item is transitive, i.e., if $(x,y)$ 
is an edge and $(y,z)$ is an edge, then $(x,z)$ is also an edge. 
\end{itemize} 
The fact that testing for posets is hard (at least as hard as testing for
triangle-freeness) follows from Theorem \ref{C5compper} by adding
directions. However, we next sketch a simpler proof. Let $T$ be a
tripartite graph on $n$ vertices with parts $V_1,V_2,V_3$ which is
$\epsilon$-far from being triangle-free. Consider the directed graph
$G$ on the same vertex set as $T$ with $(v_1,v_2) \in V_1 \times V_2$
an edge of $G$ if it is an edge of $T$, $(v_2,v_3) \in V_2 \times V_3$
an edge of $G$ if it is an edge of $T$, $(v_1,v_3) \in V_1 \times
V_3$ an edge of $G$ if it is not an edge of $T$, and there are no other edges. At least one pair in
every triangle of $T$ must be modified to turn $G$ into a poset, so $G$
is $\epsilon$-far from being a poset. Also, any subset of vertices which
is triangle-free in $T$ induces a poset in $G$. This implies that testing
for posets is at least as hard as testing for triangle-freeness.

\section{Concluding Remarks} 

We believe that comparing the number of queries needed to test various
properties, as done in this paper comparing testing perfectness and
triangle-freeness, could be an interesting direction for further
research. This is the analogue in property testing to the powerful
technique of hardness reductions in complexity theory. One general
class of hard graph properties for testing for which to compare with is
(not necessarily induced)
$H$-freeness for $H$ a fixed odd cycle.

We showed that testing perfectness is hard. This is equivalent to showing
that there is a graph which is $\epsilon$-far from being perfect such
that a random set of vertices of size polynomial in $\epsilon^{-1}$
is perfect with probability at least $1/2$. This still leaves the
possibility of getting a small witness if the graph is far from being
perfect. That is, does every graph which is $\epsilon$-far from being
perfect contain an induced odd cycle or its  complement of size at least $5$ 
and at most a
polynomial in $\epsilon^{-1}$?

We showed that testing induced $P_3$-freeness is easy, which is a step
toward completing the classification of graphs $H$ for which induced
$H$-free testing is easy. It remains to determine whether or not induced
$C_4$-freeness is easy.

Finally, it will be very interesting to characterize  
all easily testable graph properties. As all these 
properties have to be strongly testable, it follows from the main result
of \cite{AlSh08a}  that if
we restrict ourselves only to
natural properties, in the sense of \cite{AlSh08a}, 
then these properties have to be essentially hereditary.
Among the hereditary properties, 
properties that are known to be easily testable include the property
of being 
$k$-colorable for any fixed $k$, as shown in \cite{GGR}, as well
as a natural extension of it, as proved in \cite{GT}. As mentioned 
in the introduction, additional 
easily testable (hereditary) properties are $H$-freeness for
any bipartite $H$, and induced $H$-freeness for any path $H$ on
at most $4$ vertices or its complement (where the case of $4$ 
vertices is  proved in Section 2).

Hereditary properties which are not easily testable are
$H$-freeness for nonbipartite $H$, induced $H$-freeness for
all graphs besides the paths on at most $4$ vertices and their complements,
as well as possibly the cycle of length $4$ and its complement, perfectness
and comparability. Our techniques here can be applied to provide several
additional examples of easily testable and of non-easily testable 
hereditary properties, but most of these are somewhat artificial and
not familiar graph
properties. Does the above list  of known results suggest a 
(conjectured) characterization of all easily testable  hereditary
graph properties? At the moment we are unable to formulate such
a conjecture.

\end{document}